\documentclass{amsart}
\usepackage{amsmath}
\usepackage{amsthm}
\usepackage[applemac]{inputenc} 
\usepackage{hyperref}

\usepackage{amssymb}
\usepackage{graphicx}
\usepackage{enumitem}
\usepackage{amsfonts}
\usepackage{array}
\usepackage{graphicx}
\usepackage{xcolor}
\usepackage{ytableau}
\ytableausetup{centertableaux}
\newcommand{\f}{\varphi}
\newcommand{\G}{\mathbb G}

\newcommand{\C}{\mathbb C}
\newcommand{\PP}{\mathbb P}

\newcommand{\cV}{\mathcal V}

\newcommand{\be}{\mathbf e}
\newcommand{\la}{\lambda}

\newcommand\edps{md-pairs\ }
\newcommand\edp{md-pair\ }

\DeclareMathOperator{\ed}{e.\!d.}

\DeclareMathOperator{\codim}{codim}

\DeclareMathOperator{\DA}{A}

\DeclareMathOperator{\GL}{GL}

\newcolumntype{C}{>{$}c<{$}}

\newtheorem{theorem}{Theorem}[section]
\newtheorem{definition}[theorem]{Definition}

\newtheorem{lemma}[theorem]{Lemma}
\newtheorem{proposition}[theorem]{Proposition}
\newtheorem{corollary}[theorem]{Corollary}

\newtheorem{example}[theorem]{Example}

\title{Morphisms between Grassmannians, II}
\author{Gianluca Occhetta}
\address{Dipartimento di Matematica - Università di Trento, via
Sommarive 14 I-38123  Trento (TN), Italy}
\email{gianluca.occhetta@unitn.it}
\author{Eugenia Tondelli}
\address{Dipartimento di Matematica - Università di Trento, via
Sommarive 14 I-38123  Trento (TN), Italy}
\email{eugenia.tondelli@gmail.com}
%\KeyWords{{Grassmannians}{Schubert calculus}}
\begin{document}

\begin{abstract} Denote by $\G(k,n)$ the Grassmannian of linear subspaces of dimension $k$ in $\PP^n$. We show that if $\f:\G(l,n) \to \G(k,n)$ is a nonconstant morphism and $l \not=0,n-1$ then $l=k$ or $l=n-k-1$ and $\f$ is an isomorphism. 
\end{abstract}
\maketitle
\section{Introduction}

In \cite{Tango1}, Tango proved that there are no nonconstant morphisms from $\PP^m$ to the Grassmannian $\G(k,n)$ if $m >n$, and later, in \cite{Tango2}, considered the case $m=n$, proving the same result for $kn$ even, $(k,n) \not = (2,5)$, $k \not \in \{0,n-1\}$. The result in \cite{Tango1} has been generalized in \cite{NO22} to the case of morphisms $\f:\G(l,m) \to \G(k,n)$, with $m >n$, and later to a more general setting (see Theorem \ref{teo:mos8} and references therein).

 The aim of the present paper is to generalize the results in \cite{Tango2}, considering morphisms $\G(l,n) \to \G(k,n)$, and proving the following:

\begin{theorem}\label{thm:main} If $\f:\G(l,n) \to \G(k,n)$ is a nonconstant morphism and $l \not=0,n-1$ then $l=k$ or $l=n-k-1$ and $\f$ is an isomorphism.
\end{theorem}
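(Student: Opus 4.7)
The plan is to translate the problem into the language of subbundles of the trivial bundle on $\G(l,n)$ and then reduce to the unequal-ambient-space setting of Theorem~\ref{teo:mos8}. A morphism $\f:\G(l,n)\to\G(k,n)$ is equivalent to a rank-$(k+1)$ subbundle $E:=\f^*\cS_k\hookrightarrow\cO^{n+1}$ on $\G(l,n)$, where $\cS_k$ is the tautological subbundle on $\G(k,n)$. Since $\mathrm{Pic}(\G(l,n))=\mathbb Z$, one has $\f^*\cO(1)=\cO(d)$ for some integer $d\geq 1$, so $c_1(E)=-dh$ with $h$ the Pl\"ucker generator. The two natural candidates for $E$ are $\cS_l$ (forcing $l=k$, with $\f$ induced by an element of $\mathrm{PGL}_{n+1}$) and $\cQ_l^{\vee}$ (forcing $l=n-k-1$, with $\f$ the Pl\"ucker duality); the task is to rule out every other possibility.

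To do so, I will restrict $\f$ to sub-Grassmannians of the form $\G(l,m)\subset\G(l,n)$ (given by $l$-planes inside a fixed $\PP^m\subset\PP^n$, $l<m<n$), as well as to the dual family $\{V\supset\La\}\subset\G(l,n)$ for a fixed $\La\cong\PP^{l-1}$, which is again a Grassmannian of lower ambient dimension. Each such restriction is a morphism between Grassmannians with differing ambient projective spaces, and hence falls under the scope of Theorem~\ref{teo:mos8}. Using the structural conclusion of that theorem, I will describe $E|_{\G(l,m)}$ up to isomorphism; then, by varying the flag and comparing descriptions on overlapping sub-Grassmannians, I aim to conclude that $E$ is globally isomorphic to $\cS_l$ or to $\cQ_l^{\vee}$ and that $d=1$. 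The hypothesis $l\neq 0,n-1$ ensures that the sub-Grassmannians above are genuine Grassmannians rather than projective spaces, so Theorem~\ref{teo:mos8} applies non-trivially and avoids the exceptional situations of Tango's results.

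Once $E$ is pinned down, $\f$ is realized as a $\mathrm{PGL}_{n+1}$-automorphism of $\G(l,n)$ composed with either the identity or the Pl\"ucker duality; either way $\f$ is an isomorphism, yielding $l=k$ or $l=n-k-1$, as desired.

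\textbf{Main obstacle.} The delicate step is the globalization: Theorem~\ref{teo:mos8} constrains $E$ on each sub-Grassmannian, but matching these local descriptions on intersections of sub-Grassmannians and excluding exotic ``twisted'' rank-$(k+1)$ subbundles of $\cO^{n+1}$ requires further work. I expect this to hinge on a splitting-type analysis of $E$ along the minimal rational curves of $\G(l,n)$ together with cohomology vanishing for homogeneous bundles of the form $\cS_l\otimes\cQ_l^{\vee}(-j)$, in the spirit of the low-rank uniform bundle classification on Grassmannians.
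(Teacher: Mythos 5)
There is a genuine gap at the heart of your plan: the reduction to sub\--Grassmannians via Theorem~\ref{teo:mos8} does not work, for two reasons. First, that theorem requires $\ed(M)>\ed(M')$, and for a restriction $\f|_{\G(l,m)}:\G(l,m)\to\G(k,n)$ with $m<n$ one has $\ed(\G(l,m))=m<n=\ed(\G(k,n))$ --- the inequality points the wrong way, so the theorem is silent (as it must be: for $l=k$ the inclusion $\G(l,m)\hookrightarrow\G(l,n)$ is a nonconstant morphism of exactly this shape). Second, even where it applies, Theorem~\ref{teo:mos8} is a pure non-existence statement; it never yields a ``structural conclusion'' describing $E|_{\G(l,m)}$ up to isomorphism, so there is nothing to glue. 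The same problem affects your dual family: the $l$-planes containing a fixed $\PP^{l-1}$ form a $\PP^{n-l}$ with $n-l\le n$, which is again outside the range of Tango's non-existence results. Finally, you acknowledge that the globalization step (excluding exotic rank-$(k+1)$ subbundles of $\cO^{n+1}$, and in particular deriving the rank constraint $k+1\in\{l+1,n-l\}$) is left open; this is precisely the hard part, and a uniform-bundle classification in rank $k+1$ on $\G(l,n)$ is not available in the generality you would need.

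For comparison, the paper stays entirely in the Chow ring of the \emph{source}: it characterizes all maximal disjoint pairs of $\G(l,n)$ (Theorem~\ref{thm:md} and Corollary~\ref{cor:mdp}, via the combinatorial Proposition~\ref{prop:comp}), showing they all have type $\{l+1,n-l\}$. Pulling back the disjoint cycles $[X_H]\in\DA^{k+1}$ and $[X_p]\in\DA^{n-k}$ from the target, either one pullback vanishes --- in which case Lemma~\ref{lem:red} produces a nonconstant morphism to a Grassmannian in a \emph{smaller} ambient space, contradicting \cite[Proposition 2.4]{NO22} --- or the pullbacks form an md-pair, forcing $\{k+1,n-k\}=\{l+1,n-l\}$, i.e.\ $l=k$ or $l=n-k-1$. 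The isomorphism statement is then not deduced from a description of $E$ but from the Hwang--Mok theorem \cite[Main Theorem]{HM} applied to the resulting finite surjective morphism. If you want to salvage a bundle-theoretic route, you would need an independent classification of rank-$(k+1)$ globally generated quotients (or subbundles) of $\cO^{n+1}$ on $\G(l,n)$, which is a substantially harder problem than the one being solved.
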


In a nutshell, the idea  in \cite{Tango1} and \cite{NO22} to prove the constancy of a morphism $\f:M \to \G(k,n)$ was to consider the relation among Chern classes coming from the universal sequence on $\G(k,n)$, pull it back via $\f$, and show that this leads to a contradiction, via a study of the Chow ring $\DA^\bullet(M)$.

In \cite{MOS8} this idea was refined and reinterpreted geometrically, considering the Schubert varieties $X_H, X_p \subset \G(k,n)$,  parametrizing linear spaces contained in a hyperplane $H$ or passing through a point $p$; then $[X_H] \in \DA^{k+1}(\G(k,n))$, $[X_p] \in \DA^{n-k}(\G(k,n))$  and clearly $[X_H] \cdot [X_p]=0$.  If the pullback via $\f$ of one of the two cycles is zero one can construct   a morphism from $M$ to a smaller Grassmannian which factors via $\f$, allowing inductive arguments. Else, one obtains two effective nonzero cycles  whose product is zero in $\DA^{n+1}(M)$. The proof can then be finished by showing that there are no such pairs in $\DA^\bullet(M)$. This is the idea that led to the notion of effective good divisibility of a variety (see Section \ref{ssec:egd}).

The last step of the above argument could be further refined: in fact it is enough to show that for every effective nonzero $x \in \DA^{k+1}(M), y\in \DA^{n-k}(M)$ it holds $x \cdot y \not =0$. We fulfill this task, in our setup, by characterizing the pairs of effective nonzero cycles of total codimension $n+1$ in $\DA^\bullet(\G(l,n))$ whose product is zero, called maximal disjoint pairs (Corollary \ref{cor:mdp}). In particular, for $\G(l,n)$ such a pair consists of cycles of codimensions $l+1$ and $n-l$, forcing $l=k$ or $n-k-1$ in order for $\f:\G(l,n) \to \G(k,n)$ to be nonconstant. In these cases we conclude that $\f$ is an isomorphism using a Remmert-Van de Ven type theorem for rational homogeneous varieties due to Hwang and Mok (\cite[Main Theorem]{HM}).

\section{Preliminaries}

\subsection{Effective Good Divisibility}\label{ssec:egd}

The notion of effective good divisibility of a smooth complex projective variety $M$ (see \cite[Section 2.1]{MOS8}) is related to the total codimension of effective zero divisors in the Chow ring $\DA^{\bullet}(M)$.

\begin{definition}
The {\em effective good divisibility} of $M$, denoted by $\ed(M)$, is the maximum integer $s$ such that, given effective cycles $x_i \in \DA^{i}(M),$  $x_j \in \DA^{j}(M)$ with $i+j  \le s$ and $x_i  x_j=0$, then either $x_i = 0$ or $x_j = 0$.
\end{definition}

\begin{example} The effective good divisibility is known for the wide class of rational homogeneous manifolds:  it was computed for Grassmannians in \cite{NO22}, for varieties of classical type independently in \cite{MOS8} and \cite{HLL}, for varieties of exceptional type in \cite{HLL}. In this paper we are mostly interested in Grassmannians $\G(k,n)$, parametrizing linear subspaces of dimension $k$ in $\PP^n$, for which $\ed(\G(k,n))=n$.
\end{example}

Knowledge of the effective good divisibility can be used to prove the non existence of nonconstant morphisms, as exemplified by the following result.

\begin{theorem}[Cf. {\cite[Theorem 1.3]{MOS8}, \cite[Theorem 1.4]{HLL}}]\label{teo:mos8}
Let $M$ be a smooth complex projective variety, and let $M'$ be a rational homogeneous manifold of classical type, such that $\ed(M)>\ed(M')$. Then there are no nonconstant morphisms from $M$ to $M'$.
\end{theorem}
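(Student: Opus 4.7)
The plan is to combine a Chow-ring obstruction with an inductive geometric reduction, mirroring the strategy sketched in the introduction. I would argue by induction on $\dim M'$. Let $\f:M \to M'$ be a nonconstant morphism with $\ed(M)>\ed(M')=:s'$. For $M'$ of classical type the value $s'$ is realized by an explicit pair of effective Schubert cycles: there exist $Y_1, Y_2\subset M'$ with $[Y_1]\cdot[Y_2]=0$ in $\DA^\bullet(M')$ and $\codim Y_1 + \codim Y_2 = s'+1$, the vanishing coming from the fact that generic translates of $Y_1$ and $Y_2$ have disjoint support. In the Grassmannian case these are precisely $X_H,X_p$ with $p\notin H$.

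By Kleiman's transversality applied to the transitive action of $G=\mathrm{Aut}^0(M')$, for general $g\in G$ the preimage $\f^{-1}(gY_i)$ is either empty or of pure codimension $\codim Y_i$ in $M$, and in either case represents $\f^*[Y_i]$ by an effective cycle. Since pullback is a ring homomorphism, $\f^*[Y_1]\cdot\f^*[Y_2]=0$, with total codimension $s'+1\le\ed(M)$. I would then split into two cases. If both $\f^*[Y_i]$ are nonzero, the defining property of $\ed(M)$ is directly contradicted, so $\f$ must be constant. If instead one pullback vanishes, say $\f^*[Y_1]=0$, we pass to the geometric reduction.

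In this latter case $\f(M)$ avoids a generic translate $gY_1\subset M'$, and one uses the homogeneous incidence structure of $M'$ to replace $\f$ by a factored morphism $\f':M\to M''$ through a strictly smaller rational homogeneous manifold of classical type. For $\G(k,n)$ this is transparent: the choices $Y_1=X_H$ and $Y_1=X_p$ produce factorizations through $\G(k,n-1)$ and $\G(k-1,n-1)$ respectively, by assigning to each $m\in M$ the corresponding incidence datum (a common containing hyperplane, or a common point). Since $\ed(M'')\le\ed(M')<\ed(M)$, the inductive hypothesis forces $\f'$, and hence $\f$, to be constant. The main obstacle, in my view, is precisely this geometric reduction: across the classical Dynkin types $B_n,C_n,D_n$ one must identify the correct incidence assignment, verify it defines a regular morphism, and check that its target has effective good divisibility strictly bounded above by $\ed(M)$, so that the induction actually closes.
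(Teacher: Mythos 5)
First, a point of reference: this theorem is quoted in the paper from \cite{MOS8} and \cite{HLL} and is not proved here; the only instance the paper actually carries out is the type-$A$ one, via Lemma \ref{lem:red} and Proposition \ref{prop:gen}. Your skeleton --- pull back a disjoint pair $(Y_1,Y_2)$ of Schubert cycles of total codimension $\ed(M')+1$, use Kleiman transversality on general translates to realize the pullbacks as effective cycles of the expected codimensions, invoke $\ed(M)\ge\ed(M')+1$ to force one pullback to vanish, then factor $\f$ through a smaller homogeneous space and induct --- is exactly the strategy of \cite{MOS8} as sketched in the introduction, and it is sound as far as it goes. (One small logical slip: the case ``both pullbacks nonzero'' does not show that $\f$ is constant; it is simply impossible by the definition of $\ed(M)$, so the argument always lands in the reduction case. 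Also note that ``$\f^*[Y_1]=0$'' implies $\f^{-1}(gY_1)=\emptyset$ for general $g$ because a nonzero effective cycle on a projective variety has nonzero class, a point worth stating.)

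The genuine gap is the one you name yourself, and it is not a detail but the substance of the theorem. For each classical type one must (i) exhibit the extremal disjoint pair realizing $\ed(M')+1$, which presupposes knowing $\ed(M')$ --- itself a nontrivial computation occupying much of \cite{MOS8}, \cite{HLL} and \cite{NO22}; (ii) construct the incidence reduction $M'\setminus gY_1\to M''$ for isotropic Grassmannians of types $B$, $C$, $D$, where the target must be identified case by case and need not lie in the same family, and verify that its fibers are affine so that the composition with $\f$ remains nonconstant on the positive-dimensional $M$ --- this is the analogue of Lemma \ref{lem:red} and is where the real work lies; and (iii) check that $\ed(M'')<\ed(M)$ so the induction closes, together with the base of the induction (targets such as $\PP^n$, which require a separate classical argument). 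None of these steps is carried out in the proposal, so what you have is an accurate roadmap of the cited proof rather than a proof.
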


In order to prove the non existence of nonconstant morphisms between two Grassmannians $\G(l,n)$ and $\G(k,n)$, which have the same effective good divisibility we need to characterize the pairs of effective zero divisors $(x_i,x_j)$ of minimal total codimension $i+j$; they have been introduced in \cite[Definition 2.3]{MOS8}. In the definition given below we consider also the type of the pair, which keeps track of the codimensions of $x_i$ and $x_j$.

\begin{definition}\label{def:Kleiman} A (non-ordered) pair $\{x_i,x_j\}$ with $x_i \in \DA^{i}(M),$  $x_j \in \DA^{j}(M)$ nonzero effective cycles such that $x_ix_j=0$ and $i+j=\ed(M) +1$  will be called a {\em  maximal disjoint pair (\edp for short) of type $\{i,j\}$}  in $M$.
\end{definition}

We will prove Theorem \ref{thm:main} using the fact that two non-isomorphic Grassmannians $\G(l,n)$ and $\G(k,n)$ do not possess \edps of the same type, so we review in the next subsection basic facts about the generators of the Chow ring $\DA^\bullet(\G(k,n))$.

\subsection{Schubert varieties}

We will recall some basic facts about Schubert calculus in $\G(k,n)$. We refer to \cite{3264} and \cite{Fult} for the proofs and for a complete account on the subject.

Let us identify $\G(k,n)$ with the Grassmannian $G(k+1,V)$ of vector subspaces of dimension $k+1$ in a vector space $V$ of dimension $n+1$ and consider a complete flag $\cV$ of vectors subspaces of $V$:
\[0 \subsetneq V_1 \subsetneq V_2 \subsetneq \dots \subsetneq V_n \subsetneq V.\]
Given a sequence of integers $I=\{0 < i_1 < \dots < i_{k+1} \le n+1\}$, called a {\em Schubert symbol}, we define
the Schubert variety $X_I$  as
\begin{equation}
X_I = \{W \in G(k+1,V)\ |  \dim(V_{i_j} \cap W) \ge j\ \text{for all}\ j \}. \label{eq:schubert}
\end{equation}

To the Schubert variety $X_I$ one can associate a {\em Young diagram} $\la_I$ in the following way:
consider a rectangle with $k+1$ rows and $n-k$ columns and the path from the lower-left corner
to the upper-right corner consisting of $n+1$ steps, where the $i$-th step is vertical if 
$i \in I$ and horizontal if $i \not \in I$. The Young diagram is the part of
the rectangle that is top-left of this path. 

Identifying the Young diagram $\la_I$ with the {\em partition} $(\la_1, \dots, \la_{k+1})$, where $\la_i$ is the number of boxes in row $i$, we have:
\[n-k \ge \la_1 \ge\la_2 \ge \dots\ge \la_{k+1} \ge 0 \qquad \text{and} \qquad 
\la_j=i_{k+2-j} -(k+2-j).\]
By abuse, we denote by $\la_I$ also the corresponding { partition} $(\la_1, \dots, \la_{k+1})$, and we set $|I|:=|\la_I|=\sum \la_i$; this number is the dimension of the subvariety $X_I$.

 The {\em Bruhat order} on the set of Schubert symbols is defined as follows: $I \le L$ if and only if $i_j \le l_j$ for every $j=1, \dots, n+1$. Notice that $I \le L$ if and only if the Young diagram of $I$ is a subdiagram of the Young diagram of $L$.
 
The class $[X_I] \in \DA^\bullet(\G(k,n))$ does not depend on the choice of the flag $\cV$, and will be called a Schubert cycle. The Schubert cycles form a basis of $\DA^\bullet(\G(k,n))$.
 
\begin{example}\label{ex:mdp}
Let $H=\PP(V_n)$; the Schubert variety $X_H:=X_{I_H}$, corresponding to the Schubert symbol
$I_H=\{ n-k < n-k+1 < \dots < n\}$  parametrizes  $(k+1)$-dimensional subspaces of $V_n$, or equivalently $k$-dimensional linear subspaces of $\PP(V)$ contained in $H=\PP(V_n)$. The associated Young diagram  $\la_{I_H}$ is the diagram whose partition is  
$(n-k-1, \dots, n-k-1)$.

Let $p=\PP(V_1)$; the Schubert variety $X_p:=X_{I_p}$, corresponding to the Schubert symbol
 $I_p= \{ 1 <  n-k +2 < \dots < n < n+1\}$, parametrizes  $(k+1)$-dimensional subspaces of $V$ containing $V_1$,  or equivalently $k$-dimensional linear subspaces of $\PP(V)$ containing $p$. The associated Young diagram  $\la_{I_p}$ is the diagram whose partition is  
$(n-k, \dots, n-k, 0)$.
Let us show the diagrams $\la_{I_H}$ and $\la_{I_p}$ for $n=6, k=2$. \par \smallskip
\[
\la_{I_H}=
\begin{ytableau}
*(gray!40) & *(gray!40) &  *(gray!40) & \\
*(gray!40) & *(gray!40)  &  *(gray!40) & \\
*(gray!40) & *(gray!40)  &  *(gray!40) & 
\end{ytableau}
\qquad\qquad
\la_{I_p}=\begin{ytableau}
*(gray!40) & *(gray!40) &  *(gray!40) &*(gray!40) \\
*(gray!40) & *(gray!40)  &  *(gray!40) & *(gray!40)\\
 &   &  & 
\end{ytableau}
\]
\end{example}\par\medskip

A dual description of Schubert varieties, which highlights the codimension of the variety rather than the dimension, is possible. Given a Schubert symbol  $I=\{0 < i_1 < \dots < i_{k+1} \le n+1\}$ the 
 {\em dual Schubert symbol} is defined by setting $I^\vee=\{n+2-i_{k+1}  < \dots < n+2-i_{1}\}$.
The  associated Young diagram   corresponds to the partition $(\la_1^\vee, \dots, \la_{k+1}^\vee)$, where
%\[n-k \ge \bar \la_1 \ge a_2 \ge \dots \ge a_{k+1} \ge 0\]
%which is related The partitions $\la$ and $a$ are linked in the following way:
\begin{equation}\label{eq:dual}
 \la_j^\vee=n+2 - i_j-(k+2-j)=%n-k-(i_j-j)=
n-k-\la_{k+2-j}.
\end{equation}
In particular the diagram $\la_{I^\vee}$ is obtained from the diagram $\la_I$ by taking the complement and rotating it by $180^\circ$. Since $|\la_{I}|+|\la_{I^\vee}| = (k+1)(n-k) = \dim \G(k,n)$, we see that $|I^\vee|=|\la_{I^\vee}|= \codim X_{I}$.

\begin{example}\label{ex:mdpdual} Let $I_p$ and $I_H$ be the Schubert symbols introduced in Example \ref{ex:mdp}. Then the Young diagram of $\la_{I_H^\vee}$ corresponds to the partition $(1, \dots, 1)$, while the one of
$\la_{I_p^\vee}$ corresponds to the partition $(n-k, 0 , \dots, 0)$. Again, we show the diagrams for the case $n=6, k=2$.\par\smallskip
\[
\la_{I_H^\vee}=
\begin{ytableau}
*(gray!40) &  &   & \\
*(gray!40) &  &   & \\
*(gray!40) &  &   & \\
\end{ytableau}
\qquad\qquad
\la_{I_p^\vee}=\begin{ytableau}
*(gray!40) & *(gray!40) &  *(gray!40) &*(gray!40) \\
 &   &  & \\
 &   &  & 
\end{ytableau}
\]
\end{example}\par\medskip

Given a Schubert index $I$ we define the {\em opposite Schubert variety} $X^I$ to be $w_0X_{I^\vee}$, where $w_0$ is the longest element of the Weyl group of $\GL(n+1,\C)$, which acts on the canonical basis  as $w_0(\be_i)=\be_{n+1-i}$. Clearly we have an equality of cycles $[X^I]=[X_{I^\vee}] \in \DA^{|I|}(\G(k,n))$. \par\medskip

We are going to use the following well-known fact \cite[Section 1.3 and Proposition 1.3.2]{Brion} or \cite[Lemma 3.1 (3)]{Rich}:

\begin{proposition}\label{prop:ric}
The intersection of a Schubert variety $X_J$  and an opposite Schubert variety $X^I$ is nonempty
if and only if  $I \le J$. This is the case if and only if the intersection product $[X^I]\cdot[X_J]$ is nonzero.
\end{proposition}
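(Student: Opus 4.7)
The plan is to reduce both equivalences to concrete linear-algebra conditions on subspaces meeting members of a (possibly opposite) flag, and then upgrade the set-theoretic statement to the Chow-ring statement via transversality on the homogeneous space $\G(k,n)$.

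First, I would rewrite $X^I$ in terms of the opposite flag. Setting $V^{op}_i := w_0 V_i$, a direct computation from $X^I = w_0 X_{I^\vee}$ together with formula \eqref{eq:dual} for $I^\vee$ yields
\[
X^I = \bigl\{W \in G(k+1,V) : \dim(W \cap V^{op}_{n+2-i_s}) \ge k+2-s \text{ for all } s\bigr\}.
\]
The key observation is that $V_{j}$ and $V^{op}_{n+2-j'}$ intersect only at $0$ whenever $j' > j$, because their spans involve disjoint ranges of basis vectors.

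Next, I would establish the set-theoretic equivalence. For the easy direction, assuming $I \le J$, the coordinate plane $W_0 := \langle \be_{i_1}, \dots, \be_{i_{k+1}}\rangle$ lies in $X^I$ by inspection and lies in $X_J$ because $i_s \le j_s$ forces $\be_{i_1}, \dots, \be_{i_s} \in V_{j_s}$. Conversely, if $i_s > j_s$ for some $s$, any $W \in X_J \cap X^I$ would contain the subspaces $W \cap V_{j_s}$ (of dimension $\ge s$) and $W \cap V^{op}_{n+2-i_s}$ (of dimension $\ge k+2-s$), whose pairwise intersection sits inside $V_{j_s} \cap V^{op}_{n+2-i_s} = 0$; summing dimensions would force $\dim W \ge k+2$, contradicting $\dim W = k+1$.

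Finally, I would upgrade to the Chow-ring statement. Because $X^I$ is built from the opposite flag, whenever $X^I \cap X_J$ is nonempty it is, by Richardson's theorem, irreducible and proper of the expected dimension $|J|-|I|$, hence $[X^I]\cdot[X_J] = [X^I \cap X_J] \neq 0$. Conversely, when $I \not\le J$ the set-theoretic intersection is empty, and the vanishing of the intersection product follows from Kleiman's generic transversality on the homogeneous space $\G(k,n)$: a generic $\GL(n+1,\C)$-translate of $X^I$ remains disjoint from $X_J$, so the class product is zero. The main obstacle is precisely this last step, since bare set-theoretic emptiness does not by itself force vanishing in $\DA^\bullet$; everything else is a bookkeeping exercise in the combinatorics of Schubert symbols.
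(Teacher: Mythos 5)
The paper itself gives no proof of this proposition: it is quoted as a well-known fact with pointers to Brion and to Richardson, so the comparison is really with the standard arguments in those sources. Your set-theoretic part reproduces them correctly and completely: the description of $X^I$ via the opposite flag, the coordinate subspace $\langle\be_{i_1},\dots,\be_{i_{k+1}}\rangle$ witnessing nonemptiness when $I\le J$, and the count $\dim W\ge s+(k+2-s)=k+2$ excluding points of $X_J\cap X^I$ when $i_s>j_s$ are all sound. The implication ``nonempty $\Rightarrow$ $[X^I]\cdot[X_J]\ne 0$'' is also fine: by Richardson (or just by properness of the intersection on a smooth variety) the product is a nonzero effective cycle, hence nonzero in $\DA^\bullet(\G(k,n))$.

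The genuine gap is in your final step, and it is compounded by a false premise. First, for fundamental classes of closed subvarieties of a smooth projective variety, set-theoretic disjointness \emph{does} force vanishing of the product: Fulton's refined intersection product is by construction a class in $\DA_*(X^I\cap X_J)$, whose pushforward to $\G(k,n)$ is $[X^I]\cdot[X_J]$; if $X^I\cap X_J=\emptyset$ this group is zero and there is nothing left to prove. Second, the substitute you offer does not deliver what you claim: Kleiman's theorem says that a general translate $gX^I$ meets $X_J$ \emph{generically transversally}, not that it misses $X_J$; whenever $|J|\ge |I|$ the expected dimension is nonnegative and Kleiman alone gives no emptiness, so ``a generic translate remains disjoint'' is exactly the assertion that needs proof. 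It is true, but the honest route to it is the Bruhat decomposition: $\GL(n+1,\C)$ acts transitively on pairs of transverse complete flags, so for general $g$ the pair $(gw_0\cV,\cV)$ can be carried to $(w_0\cV,\cV)$ and the disjointness of $X^I$ and $X_J$ transfers to $gX^I$ and $X_J$. Deducing the generic disjointness instead from the vanishing of the product would be circular. Either repair (the refined product, or the transitivity on transverse flag pairs) closes the gap; as written, the step does not.
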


\begin{example}
Let $I_H$ and $I_p$ be as in Example \ref{ex:mdp}. Clearly, recalling the geometric descriptions of $X_{I_H}$ and $X_{I_p}$, we have that $[X_{I_H}]\cdot [X_{I_p}]=0$, but we can obtain this also using Proposition \ref{prop:ric}, since $I_H^\vee \not \le I_p$ (and dually $I_p^\vee \not \le I_H$).
\end{example}
\[
I_H^\vee \not \le I_p:
\begin{ytableau}
*(gray!80) &*(gray!20) &  *(gray!20) &*(gray!20) \\
*(gray!80) &  *(gray!20) &  *(gray!20) &*(gray!20) \\
*(gray!50) * &  &   & \\
\end{ytableau}
\qquad\qquad
I_p^\vee \not \le I_H:
\begin{ytableau}
*(gray!80) & *(gray!80) &  *(gray!80) &*(gray!50) * \\
 *(gray!20) &  *(gray!20) &*(gray!20)   & \\
 *(gray!20) &  *(gray!20) &*(gray!20)   & \\
\end{ytableau}
\]

\section{Md-pairs in Grassmannians}

In this section we will show that the only \edp  for $\G(k,n)$, $1 \le k \le n-2$ is essentially the one described in Example \ref{ex:mdp}; let us start by considering \edps whose elements are Schubert cycles.

\begin{theorem}\label{thm:md} In the Grassmannian  $\G(k,n)$, with $1 \le k \le n-2$, let $X_I, X_J$ be  Schubert varieties  with $\codim(X_I)+ \codim (X_J) \le n+1$. Then $[X_I]\cdot [X_J]=0$ if and only if $\{I,J\}=\{I_H,I_p\}$.
\end{theorem}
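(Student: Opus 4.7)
The ``if'' direction is immediate: since $I_H^\vee \not\le I_p$ (as illustrated at the end of the previous subsection), Proposition \ref{prop:ric} gives $[X_{I_H}]\cdot[X_{I_p}] = 0$. For the converse the plan is to translate the vanishing into a combinatorial condition on Young diagrams via Proposition \ref{prop:ric}, and then exploit the codimension bound to force the specific shapes.

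By Proposition \ref{prop:ric}, $[X_I] \cdot [X_J] = 0$ is equivalent to $I^\vee \not\le J$ in the Bruhat order, i.e., to the Young diagram $\la_{I^\vee}$ not being contained in $\la_J$. Writing $a = \la_{I^\vee} = (a_1,\dots,a_{k+1})$ and $b = \la_{J^\vee} = (b_1,\dots,b_{k+1})$, and applying \eqref{eq:dual} to express $(\la_J)_i = (n-k) - b_{k+2-i}$, the non-containment becomes the existence of indices $i,j \ge 1$ with $i+j = k+2$ and $a_i + b_j \ge n-k+1$. The hypothesis $\codim(X_I) + \codim(X_J) \le n+1$ reads $|a| + |b| \le n+1$, while the monotonicity of $a$ and $b$ gives $|a| \ge i\, a_i$ and $|b| \ge j\, b_j$, so $i\, a_i + j\, b_j \le n+1$.

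The remaining step is a short optimization: minimizing $i\, a_i + j\, b_j$ subject to $a_i + b_j \ge n-k+1$, $0 \le a_i, b_j \le n-k$ and $i+j = k+2$ is achieved at $\{a_i, b_j\} = \{n-k, 1\}$ and yields
\[
i\, a_i + j\, b_j \ge (n+1) + (\min(i,j)-1)(n-k-1).
\]
Since $n-k-1 \ge 1$ by the hypothesis $k \le n-2$, equality is forced throughout: in particular $\min(i,j) = 1$, so $\{i,j\} = \{1, k+1\}$. In the case $i = 1, j = k+1$, equality in each of the bounds forces $a = (n-k, 0,\dots,0) = \la_{I_p^\vee}$ and $b = (1,\dots,1) = \la_{I_H^\vee}$ (cf.\ Example \ref{ex:mdpdual}); the case $i = k+1, j = 1$ is symmetric. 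Either way $\{I, J\} = \{I_H, I_p\}$. The only subtlety in this plan is the combinatorial optimization above, which is elementary once the correct change of variables (passing from $(I,J)$ to $(a,b) = (\la_{I^\vee}, \la_{J^\vee})$) has been made.
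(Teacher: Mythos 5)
Your proof is correct, and it follows the same overall strategy as the paper: reduce $[X_I]\cdot[X_J]=0$ to the non-containment $\la_{I^\vee}\not\le\la_J$ via Proposition \ref{prop:ric}, then show that the codimension bound together with the monotonicity of the partitions forces the two specific shapes. The difference is in how the combinatorial core is organized. The paper works asymmetrically with $\la_{I^\vee}$ and $\mu_J$, states the key step as a separate result (Proposition \ref{prop:comp}), and splits into two cases according to whether the violation occurs at the first row ($h=0$) or later ($h>0$), each handled by a chain of inequalities. You instead dualize \emph{both} partitions, which turns the codimension hypothesis into the symmetric condition $|a|+|b|\le n+1$ and the non-containment into $a_i+b_j\ge n-k+1$ with $i+j=k+2$; the two bounds $|a|\ge i\,a_i$, $|b|\ge j\,b_j$ then reduce everything to a single linear optimization whose value $(n+1)+(\min(i,j)-1)(n-k-1)$ immediately forces $\min(i,j)=1$ and rigidity. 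This symmetric packaging collapses the paper's case distinction into one computation and makes visible exactly where both hypotheses enter: $k\le n-2$ gives $n-k-1\ge 1$, and $k\ge 1$ guarantees $i\ne j$ when $\min(i,j)=1$, so that the minimizer $\{a_i,b_j\}=\{n-k,1\}$ is unique (for $k=0$ the case $i=j=1$ degenerates and the statement indeed fails, consistent with disjoint complementary linear subspaces of $\PP^n$). It would be worth making that last use of $k\ge 1$ explicit, but the argument is sound.
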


\begin{proof} Since $[X_I]=[X^{I^\vee}]$, by Proposition \ref{prop:ric} we know that $[X_I]\cdot [X_J]\not =0$ if and only if $I^\vee \le J$. This in turn happens if and only if the Young diagram of $I^\vee$ is contained in the Young diagram  of $J$, i.e., the associated partitions $\la_{I^\vee}$ and $\mu_J$ satisfy $\la_i^\vee \le \mu_i$ for  $1 \le i \le k+1$. Note that the assumption on the codimensions can be rewritten  as 
\[|\la_{I^\vee}| + (\dim \G(k,n)- |\mu_J|) \le n+1\]
which can be restated as
\begin{equation}
|\la_{I^\vee}|  \le   |\mu_J|  - k(n-k)+(k+1).
\end{equation}
Now, from Proposition \ref{prop:comp} below we get 
$(\la_{I^\vee},\mu_J)=((n-k, 0, \dots, 0),(n-k-1, \dots, n-k-1))$ or $((1, \dots, 1),(n-k, \dots, n-k, 0))$ and from Example \ref{ex:mdp} we obtain the  statement.
\end{proof}

\begin{proposition}\label{prop:comp} Given integers $n,k$ such that $1 \le k \le n-2$, let 
 $\la$ and $\mu$ be two partitions such that
\begin{equation}\label{eq:part0}
n-k \ge \la_1 \ge \dots\ge \la_{k+1} \ge 0 \qquad \qquad n-k \ge \mu_1  \ge \dots\ge \mu_{k+1} \ge 0
\end{equation}
and
\begin{equation}\label{eq:part1}
|\la|  \le   |\mu|  - k(n-k)+(k+1).
\end{equation}
Then
$\la \not \le \mu$ if and only if 
$\la=(n-k, 0, \dots, 0)$, $\mu=(n-k-1, \dots, n-k-1)$ or $\la=(1, \dots, 1)$, $\mu=(n-k, \dots, n-k, 0)$.
\end{proposition}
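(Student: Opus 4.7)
The plan is to prove the two directions of the equivalence separately. For the ``if'' direction I would check directly that both specified pairs $(\lambda,\mu)$ lie in the rectangle \eqref{eq:part0}, that $|\mu|-|\lambda| = k(n-k)-(k+1)$ in each case (so \eqref{eq:part1} holds with equality), and that the containment $\lambda \le \mu$ fails (at the first row for the first pair, at the last row for the second).

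For the ``only if'' direction, I will suppose $\lambda \not\le \mu$ and pick any $i_0 \in \{1,\ldots,k+1\}$ with $\lambda_{i_0} > \mu_{i_0}$. Setting $a := \mu_{i_0}$, the chain $a+1 \le \lambda_{i_0} \le n-k$ supplies the crucial bound $0 \le a \le n-k-1$. Monotonicity of $\lambda$ forces $\lambda_i \ge a+1$ for $i \le i_0$, hence $|\lambda| \ge i_0(a+1)$, while monotonicity of $\mu$ together with \eqref{eq:part0} gives $\mu_i \le n-k$ for $i<i_0$ and $\mu_i \le a$ for $i \ge i_0$, hence $|\mu| \le (i_0-1)(n-k) + (k+2-i_0)a$. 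Substituting these into \eqref{eq:part1} and simplifying yields the key inequality
\[(k+2-2i_0)\,a \;\ge\; (k+1-i_0)(n-k-1).\]

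The proof will conclude with a case split on $i_0$. Writing $p = k+2-2i_0$ and $q = k+1-i_0$, one has $q - p = i_0 - 1$. In the intermediate range $2 \le i_0 \le k$ the right hand side is at least $n-k-1 \ge 1$; if $p \le 0$ the left hand side is nonpositive, and if $p \ge 1$ the bound $a \le n-k-1$ forces the left hand side to be at most $p(n-k-1) < q(n-k-1)$, a contradiction in either case. For $i_0=1$, the key inequality forces $a = n-k-1$, and both monotonicity estimates must then be tight (since their difference already matches the right hand side of \eqref{eq:part1}), producing $\lambda=(n-k,0,\ldots,0)$ and $\mu=(n-k-1,\ldots,n-k-1)$. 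The case $i_0=k+1$ is symmetric and yields the second exceptional pair. I expect the intermediate range to be the main obstacle: excluding it relies on the refined bound $a \le n-k-1$ rather than $a \le n-k$, which itself comes from requiring $\lambda_{i_0}$ to fit in the rectangle.
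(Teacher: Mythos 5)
Your proposal is correct and follows essentially the same route as the paper's proof: both bound $|\la|$ from below and $|\mu|$ from above using monotonicity at a row $i_0$ where $\la_{i_0}>\mu_{i_0}$, feed these into \eqref{eq:part1}, and force all inequalities to be equalities in the surviving cases. The only difference is organizational --- you consolidate the estimates into the single inequality $(k+2-2i_0)a \ge (k+1-i_0)(n-k-1)$ and split into $i_0=1$, $i_0=k+1$, and the (excluded) intermediate range, whereas the paper splits into $h=0$ and $h>0$ and shows $h=k$ inside the second case.
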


\begin{proof}
In order to have  $\la \not \le \mu$ we must have   $\la_{h+1} > \mu_{h+1}$ for some $h \in \{0, \dots, k\}$. We write $\la_{h+1}=\mu_{h+1}+m$ for some positive integer $m$; let us distinguish two cases: $h >0$ or $h=0$. 
\par\medskip
\noindent \framebox{$h=0$}
\par\medskip
\noindent Using the inequalities (\ref{eq:part0}) we see that
\begin{equation}\label{eq:part01}
%\sum_{j \ge 2} \mu_j -m =
|\mu|-\la_1 \le k\mu_{1} -m = k\la_1-(k+1)m \le k(n-k) -(k+1)m
\end{equation}
hence, using (\ref{eq:part1}) we obtain
\begin{equation}\label{eq:part02}
0 \le \sum_{j \ge 2} \la_j = |\la| - \la_{1} \le (k+1)(1-m)
\end{equation}
forcing $m=1$ and $\la_j=0$ for $j \ge 2$. Moreover, all inequalities in (\ref{eq:part01}) and (\ref{eq:part02}) are equalities. In particular
\[
\sum_{j \ge 2} \mu_j -1 = |\mu|-\la_1 = k(n-k-1) -1
\]
so that $\mu_j=n-k-1$ for every $j \ge 2$. Now, since $\mu_2 \le \mu_1 < \la_1\le n-k$, we get  $\mu_1=n-k-1$ and $\la_1=n-k$ (Note that here we are using the assumption $k \ge 1$). We have thus proved that  $\la=(n-k, 0, \dots,0)$, $\mu=(n-k-1, \dots, n-k-1)$.\par\medskip
\noindent \framebox{$h>0$}
\par\medskip
\noindent We use inequalities (\ref{eq:part0}) to obtain
\[ 
\sum_{j >h+1} \mu_j \le (k-h)\mu_{h+1} 
\le  (k-h)(n-k-m) 
\qquad
\text{and}
\qquad
  \sum_{j \le h} \mu_j \le h(n-k).
\]
Combining the two inequalities we get 
\begin{equation}\label{eq:part1.5}
|\mu| -\la_{h+1}\le -(k-h+1)m +  k(n-k). 
\end{equation}
Now, from (\ref{eq:part1}) we obtain
\begin{equation}\label{eq:part2}
|\la| - \la_{h+1} \le -(k-h+1)m +  k+1= hm+(k+1)(1-m).
\end{equation}
On the other hand, using again (\ref{eq:part0}), we get
\begin{equation}\label{eq:part3}
|\la|-\la_{h+1} \ge  \sum_{j \le h} \la_j \ge  h\la_{h+1} \ge hm.
\end{equation}

Combining (\ref{eq:part2}) and (\ref{eq:part3}) we obtain that  $m= 1$; moreover equality holds everywhere in (\ref{eq:part1.5}), (\ref{eq:part2}) and (\ref{eq:part3}). In particular
\begin{equation} 
\la_{h+1}=1, \quad |\mu|= k(n-k) -(k-h)\label{eq:b}
\qquad\text{and} \qquad  |\la|=h+1.
\end{equation}
Since  $\la_{h+1}=1$ we have $\mu_{h+1}=0$, hence $\mu_j=0$ for $j \ge h+1$. Then 
\[|\mu| = \sum_{j \le h} \mu_j \le h(n-k),\] 
which, combined with (\ref{eq:b}), recalling that $k \le n-2$, gives $h=k$ and $\mu_j=n-k$ for every $j \le k$, so $\mu=(n-k, \dots, n-k, 0)$. By (\ref{eq:b}) we have $|\la|=k+1$. Recalling that  $\la_j \le \la_{k+1}=1$ for every $j$, we conclude that $\la=(1, \dots, 1)$. 
\end{proof}

We can now describe the \edps of $\G(k,n)$. 

\begin{corollary}\label{cor:mdp}
Let $k,n$ be integers such that  $1 \le k \le n-2$, and let $[\Gamma] \in \DA^i(\G(k,n)), [\Delta] \in \DA^j(\G(k,n))$ be effective nonzero cycles such that $[\Gamma] \cdot [\Delta]=0$ and $i+j \le n+1$. Then \{$[\Gamma],[\Delta]\}=\{a[X_{H}],b[X_{p}]\}$ with $X_H$ and $X_p$ as in Example \ref{ex:mdp}. In particular all the md-pairs  in $\G(k,n)$ have type $\{k+1,n-k\}$. 
\end{corollary}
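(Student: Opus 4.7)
The plan is to reduce to Theorem~\ref{thm:md} by expanding $[\Gamma]$ and $[\Delta]$ in the Schubert basis and exploiting positivity.

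First, write
\[
[\Gamma] = \sum_{|I^\vee|=i} a_I[X_I], \qquad [\Delta] = \sum_{|J^\vee|=j} b_J[X_J].
\]
The key input is that $a_I, b_J\ge 0$: by Proposition~\ref{prop:ric} the opposite Schubert basis $\{[X^I]\}$ is dual to $\{[X_I]\}$ under the intersection pairing (note that if $|I|=|J|$ and $I\le J$, then $I=J$), so $a_I = \deg\bigl([\Gamma]\cdot[X^I]\bigr)$, which is non-negative by Kleiman's transversality applied to a generic translate of $X^I$.

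Then, expanding $[X_I]\cdot[X_J] = \sum_K c_{IJ}^K [X_K]$ with $c_{IJ}^K \ge 0$ (Littlewood--Richardson positivity on the Grassmannian), the hypothesis
\[
0 = [\Gamma]\cdot[\Delta] = \sum_K\Bigl(\sum_{I,J} a_I b_J c_{IJ}^K\Bigr)[X_K]
\]
forces $a_I b_J c_{IJ}^K = 0$ for all $I,J,K$, since every summand is non-negative and Schubert classes are linearly independent. Hence $[X_I]\cdot[X_J]=0$ for every pair $(I,J)$ in the joint support, so Theorem~\ref{thm:md} yields $\{I,J\}=\{I_H,I_p\}$.

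To finish, if $i\ne j$ this directly forces $[\Gamma]=a[X_H]$ and $[\Delta]=b[X_p]$ up to ordering, and the type is $\{k+1,n-k\}$. If $i=j$, then $n=2k+1$ and both Schubert supports are contained in $\{I_H,I_p\}$; the only subtle case to rule out is that $[\Gamma]$ and $[\Delta]$ share a common Schubert summand, and this is excluded by $[X_H]^2\ne 0$ and $[X_p]^2\ne 0$, both consequences of Proposition~\ref{prop:ric} together with the inequalities $I_H^\vee\le I_H$ and $I_p^\vee\le I_p$, which hold whenever $1\le k\le n-2$. I expect the main technical point to be the non-negativity of the Schubert coefficients of an effective cycle; once that is in hand, everything else follows formally from Theorem~\ref{thm:md}.
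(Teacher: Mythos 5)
Your proposal is correct and follows essentially the same route as the paper: expand $[\Gamma]$ and $[\Delta]$ in the Schubert basis with nonnegative coefficients, use Littlewood--Richardson positivity and linear independence of Schubert classes to force $[X_I]\cdot[X_J]=0$ for every pair in the joint support, and conclude via Theorem~\ref{thm:md}. The only differences are that you derive the nonnegativity of the Schubert coefficients of an effective class directly from duality of the (opposite) Schubert basis and Kleiman transversality, where the paper simply cites \cite{FMSS}, and that you spell out the endgame (in particular the case $i=j$, i.e.\ $n=2k+1$, ruled out by $[X_H]^2\neq 0$ and $[X_p]^2\neq 0$) more explicitly than the paper does.
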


\begin{proof}
By \cite[Corollary of Theorem 1]{FMSS}, %as explained in \cite[Section 3]{Kopper}, 
the cones of effective classes of a fixed codimension in $\G(k,n)$ are polyhedral cones generated
by the Schubert classes of the same codimension. Therefore
we can write $[\Gamma]$ and $[\Delta]$ as linear combinations  with nonnegative coefficients:
\[[\Gamma]=\sum_{|K^\vee|=i} \gamma_K[X_K] \qquad  [\Delta]=\sum_{|L^\vee|=j} \delta_L[X_L];\]
Moreover every product $[X_K]\cdot [X_L]$ is a combination  of Schubert cycles with nonnegative coefficients, due to the Littlewood--Richardson rule. Then, if $\gamma_K\delta_L \not=0$ we must have $[X_K]\cdot [X_L]=0$, and the statement now follows from Theorem \ref{thm:md}.
\end{proof}

\section{Morphisms to Grassmannians}

In this section we will prove Theorem \ref{thm:main}. We state and prove first two auxiliary results that could be useful to study morphisms to Grassmannians from other kinds of varieties.

\begin{lemma}\label{lem:red}
Let $k,n$ be integers such that  $1 \le k\le n-2$, and let $Y \subset \G(k,n)$ be a positive dimensional closed irreducible subvariety. Then
\begin{itemize}[leftmargin=15pt]
\item If $[Y] \cdot [X_H]=0$  there is a nonconstant morphism $\psi:Y \to \G(k-1,n-1)$.
\item If $[Y] \cdot [X_p]=0$  there is a nonconstant morphism $\psi:Y \to \G(k,n-1)$.
\end{itemize}
\end{lemma}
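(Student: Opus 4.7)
The plan is to extract $\psi$ from the universal exact sequence $0\to\cS\to V\otimes\cO\to\cQ\to 0$ on $\G(k,n)$, exploiting Kleiman's transversality. For the first bullet, a hyperplane $H=\PP(V_n)$ is defined by a linear form $\ell\in V^*$, and the composition $\cS\hookrightarrow V\otimes\cO\xrightarrow{\ell}\cO$ gives a section $\sigma_H$ of $\cS^\vee$ whose zero locus is $X_H$. Using Kleiman and the hypothesis $[Y]\cdot[X_H]=0$, I would extract a dense open subset $U$ of the hyperplane space such that $Y\cap X_H=\emptyset$ for every $H\in U$, making $\sigma_H|_Y$ nowhere vanishing; the kernel of $\sigma_H|_Y\colon\cS|_Y\to\cO_Y$ is then a rank-$k$ subbundle of $\cS|_Y$ contained in $V_n\otimes\cO_Y$, whose classifying map is
\[
\psi_H\colon Y\to G(k,V_n)=\G(k-1,n-1),\qquad \Lambda\mapsto\Lambda\cap V_n.
\]
Dually, for the second bullet a point $p=[v]\in\PP^n$ produces a section $\tau_p$ of $\cQ$ with zero locus $X_p$; under $[Y]\cdot[X_p]=0$ it is nowhere vanishing on $Y$ for $p$ in a dense open, and $\cS|_Y\hookrightarrow(V/\C v)\otimes\cO_Y$ is a rank-$(k+1)$ subbundle classifying the linear projection from $p$, $\psi_p\colon Y\to\G(k,n-1)$.

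The hard part will be showing nonconstancy of $\psi_H$ (respectively $\psi_p$) for some choice in the above dense open---an individual $\psi_H$ may well be constant. I would argue by contradiction: assume $\psi_H$ is constant for every $H\in U$, with value $L(H)\subseteq V_n$. Then $L(H)\subseteq W:=\bigcap_{\Lambda\in Y}\Lambda$. Since $Y$ is irreducible and positive-dimensional, $\dim W\le k$ (otherwise $W$ would equal each $\Lambda\in Y$ and $Y$ would collapse to a point); combined with $\dim L(H)=k$, this forces $L(H)=W=:L$ independently of $H$. But then the fixed $k$-dimensional subspace $L$ lies in every hyperplane of $U$, which is a closed condition of codimension $k\ge 1$ in the $\PP^n$ of hyperplanes, contradicting the density of $U$.

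The symmetric argument handles the second bullet. Constancy of $\psi_p$ for every $p$ in a dense open $U'\subset\PP^n$ would force the linear span $N:=\sum_{\Lambda\in Y}\Lambda\subseteq V$ to satisfy $\dim N\le k+2$; positive-dimensionality of $Y$ excludes $\dim N\le k+1$ (which would collapse $Y$ to a point), so $\dim N=k+2$ and $N$ must contain $\C v$ for every $[v]\in U'$, giving $N=V$ and contradicting $\dim N=k+2\le n$, where I use $k\le n-2$. The two extremal hypotheses $k\ge 1$ and $k\le n-2$ thus enter precisely to close off the two contradictions.
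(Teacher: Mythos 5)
Your construction of the morphisms coincides with the paper's: in both cases $\psi$ is the restriction to $Y$ of $\Lambda\mapsto\Lambda\cap H'$ (resp.\ of the linear projection from $q$), defined on the complement of $X_{H'}$ (resp.\ $X_q$), after using Kleiman transversality and $[Y]\cdot[X_H]=0$ to find a hyperplane (resp.\ point) whose Schubert variety misses $Y$; your universal-bundle packaging of these maps is just a different way of writing them down. Where you genuinely diverge is on nonconstancy, which you rightly identify as the crux. The paper disposes of it in one line: the fibers of $\pi_{H'}$ and $\pi_q$ are affine (citing \cite[Example 5.5]{MOS8} -- the fiber of $\pi_{H'}$ over $M$ is the set of $k$-planes containing $M$ and not contained in $H'$, an $\mathbb{A}^{n-k}$), so the positive-dimensional projective irreducible $Y$ cannot sit inside a single fiber; this shows \emph{every} admissible choice of $H'$ or $q$ yields a nonconstant map. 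You instead run a variational argument: if $\psi_{H}$ were constant for all $H$ in the dense open set, the common value would be the fixed subspace $W=\bigcap_{\Lambda\in Y}\Lambda$ of dimension $k$, which would then lie in a dense set of hyperplanes, impossible for $k\ge 1$; dually for points, using $k\le n-2$. Your argument is correct and self-contained (no external citation needed), at the price of proving only that \emph{some} choice works rather than all of them -- which is all the lemma requires. Your parenthetical worry that ``an individual $\psi_H$ may well be constant'' is in fact unfounded, precisely because of the affineness of the fibers.
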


\begin{proof}
In the first case, for a general hyperplane $H'$, $Y$ does not meet the subvariety $X_{H'} \subset \G(k,n)$ parametrizing  linear spaces contained in $H'$. We can take $\psi$ to be the restriction to $Y$ of the morphism $\pi_H:\G(k,n) \setminus X_{H'} \to \G(k-1,n-1)$, which sends $\Lambda$ to $\Lambda \cap H'$. Since the fibers of this morphism are affine (see \cite[Example 5.5]{MOS8}), we get that $\psi$ is not constant.

The argument in the second case is similar: for a general point $q \in \PP^n$,  $Y$ does not meet the subvariety $X_q \subset \G(k,n)$ parametrizing linear spaces passing through $q$.  We can take $\psi$ to be the restriction to $Y$ of the morphism $\pi_q:\G(k,n) \setminus X_q \to \G(k,n-1)$, which sends $\Lambda$ to the linear projection of $\Lambda$ from $q$ to a hyperplane. Again the fibers of this morphism are affine and $\psi$ is not constant.
\end{proof}

\begin{proposition}\label{prop:gen} Let $k,n$ be integers such that $1 \le k\le n-2$ and let 
$M$ be a smooth variety with $\ed(M)=n$ in which there is no \edp of type $\{k+1,n-k\}$.  
Then every morphism $\f:M \to \G(k,n)$ is constant.
\end{proposition}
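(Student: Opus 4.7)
Suppose for contradiction that $\f:M\to\G(k,n)$ is nonconstant, and pull back via $\f$ the two Schubert classes of Example~\ref{ex:mdp}, namely $[X_H]\in\DA^{k+1}(\G(k,n))$ and $[X_p]\in\DA^{n-k}(\G(k,n))$, which satisfy $[X_H]\cdot[X_p]=0$.  This produces effective cycles $x:=\f^*[X_H]\in\DA^{k+1}(M)$ and $y:=\f^*[X_p]\in\DA^{n-k}(M)$ with $x\cdot y=0$ and total codimension $(k+1)+(n-k)=n+1=\ed(M)+1$.  If both $x$ and $y$ were nonzero, the pair $\{x,y\}$ would be an md-pair of type $\{k+1,n-k\}$ in $M$, which is excluded by hypothesis; hence one of them must vanish.

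\textbf{Reducing to a smaller Grassmannian.}  Assume $x=\f^*[X_H]=0$ (the case $y=0$ is entirely symmetric).  By Kleiman's generic transversality applied to the transitive $\GL(n+1,\C)$-action on $\G(k,n)$, for a general hyperplane $H'\subset\PP^n$ the preimage $\f^{-1}(X_{H'})$ is either empty or smooth of pure codimension $k+1$, and its class equals $\f^*[X_H]=0$.  A nonempty effective cycle of pure codimension on a smooth projective variety represents a nonzero Chow class, since its intersection with a suitable power of an ample class is strictly positive; hence $\f^{-1}(X_{H'})=\emptyset$, so $\f(M)\subset\G(k,n)\setminus X_{H'}$.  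Composing $\f$ with the projection $\pi_{H'}:\G(k,n)\setminus X_{H'}\to\G(k-1,n-1)$ introduced in the proof of Lemma~\ref{lem:red} produces a morphism $\psi:=\pi_{H'}\circ\f:M\to\G(k-1,n-1)$.  Since $\f$ is nonconstant, $\f(M)$ is an irreducible projective variety of positive dimension; the fibers of $\pi_{H'}$ being affine, $\f(M)$ cannot lie inside a single fiber, so $\psi$ is nonconstant.  But $\G(k-1,n-1)$ (respectively $\G(k,n-1)$ in the symmetric case, which is a genuine Grassmannian thanks to $k\le n-2$) is a rational homogeneous manifold of classical type with effective good divisibility $n-1<n=\ed(M)$, so Theorem~\ref{teo:mos8} forbids any such $\psi$, a contradiction.

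\textbf{Main obstacle.}  The delicate technical step is the passage from the cohomological vanishing $\f^*[X_H]=0$ to the geometric emptiness $\f(M)\cap X_{H'}=\emptyset$ for general $H'$.  Kleiman's transversality is essential here, because it ensures that the pullback class is realized by an honest preimage of the expected pure codimension; only then can positivity of effective classes on a smooth projective variety be leveraged to conclude emptiness.  Once this step is secured, the remainder is a direct application of Lemma~\ref{lem:red} (more precisely, of the morphism $\pi_{H'}$ appearing in its proof) together with Theorem~\ref{teo:mos8}.
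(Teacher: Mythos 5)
Your proof is correct and follows essentially the same route as the paper: pull back $[X_H]$ and $[X_p]$ via Kleiman generic transversality, use the absence of md-pairs of type $\{k+1,n-k\}$ to force one of the two pullbacks to vanish, and then project away from the corresponding Schubert variety to obtain a nonconstant morphism to a smaller Grassmannian, which is impossible. The only cosmetic differences are that you invoke Theorem~\ref{teo:mos8} for the final contradiction where the paper cites \cite[Proposition 2.4]{NO22}, and that you unfold the construction of Lemma~\ref{lem:red} instead of citing it as a black box.
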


\begin{proof}
Let $[X_H],[X_p] \in \DA^\bullet(\G(k,n))$ be as in Example \ref{ex:mdp}.  
For general $g,g' \in \GL(n+1, \C)$, $gX_H$ and $g'X_p$  are disjoint and generically transverse to $\varphi$, that is, $\varphi^{-1}(gX_H), \varphi^{-1}(g'X_p)$ are generically reduced and of the same codimensions as $X_H,X_p\subset X$ (see \cite[Theorem 1.7]{3264}). 

By \cite[Theorem 1.23]{3264}, we have that  $\varphi^*[X_H]=[\varphi^{-1}(gX_H)] \in \DA^{k+1}(M)$ and $\varphi^*[X_p]=[\varphi^{-1}(g'X_p)]  \in \DA^{n-k}(M)$. In particular 
\[\codim \varphi^*[X_H] + \codim \varphi^*[X_p] = n+1 = \ed(M)+1\] 
and 
$\varphi^*[X_H]\cdot \varphi^*[X_p]=0$. By assumption $(\varphi^*[X_H],\varphi^*[X_p])$ cannot be an \edp for $M$, so one of the two cycles is zero.

If $\f$ were not constant, by Lemma \ref{lem:red} we would get a nonconstant morphism $\psi\circ \f$ from $M$ to  $\G(k-1,n-1)$ or $\G(k,n-1)$, contradicting \cite[Proposition 2.4]{NO22}.
\end{proof}

\begin{proof}[Proof of Theorem \ref{thm:main}] Since the Picard number of  $\G(l,n)$ is one, a nonconstant morphism must be finite. In fact, if this were not the case, the pullback of an ample line bundle on the image will be ample on $\G(l,n)$, but trivial on the contracted curves, contradicting Kleiman's criterion.   If $k=0,n-1$  we then have a morphism $\f: \G(l,n) \to \PP^n$ which is constant since $\dim  \G(l,n) >n$. Else, by Proposition \ref{prop:gen}, if $\f$ is not constant then $l=k$ or $n-k-1$. In this case, since  the dimensions of the domain and the codomain are equal, $\f$ must be surjective. We conclude that $\f$ is an isomorphism by \cite[Main Theorem]{HM}.
\end{proof}

\subsection*{Acknowledgements}

We would like to thank an anonymous referee for useful comments and suggestions.

\subsection*{Data availability statement}
Data sharing not applicable to this article as no datasets were generated or analyzed during the current study.

\subsection*{Conflict of interests}
The authors have no relevant financial or non-financial interests to disclose.

\bibliographystyle{plain}

\end{document}